\newtheorem{theorem}{Theorem}[section]
\newtheorem{lemma}[theorem]{Lemma}
\newtheorem{proposition}[theorem]{Proposition}
\newtheorem{corollary}[theorem]{Corollary}
\newtheorem{conjecture}[theorem]{Conjecture}
\theoremstyle{definition}
\newtheorem{definition}[theorem]{Definition}
\newtheorem{example}[theorem]{Example}
\newtheorem{observation}[theorem]{Observation}
\newcommand{\R}{\mathbb{R}}
\newcounter{figures}
\newcommand{\figlabel}[1]{%
   \refstepcounter{figures}%
   \label{#1}}
\begin{document}

\begin{frontmatter}
\title{Classifying Tractable Instances of the Generalized Cable-Trench Problem}

\author[add1]{Mya Davis}
\author[add2]{Carl Hammarsten\corref{cor}}
\ead{carl.hammarsten@desales.edu}
\author[add3]{Siddarth Menon}
\author[add4]{Maria Pasaylo}
\author[add5]{Dane Sheridan}

\cortext[cor]{Corresponding author}
\address[add1]{Wake Forest University}
\address[add2]{DeSales University}
\address[add3]{University of California, Berkeley}
\address[add4]{University of Florida}
\address[add5]{University of California, Los Angeles}

\date{\today}

\begin{abstract}
Given a graph $G$ rooted at a vertex $r$ and weight functions, $\gamma, \tau: E(G)\to \R$, the generalized cable-trench problem (CTP) is to find a single spanning tree that simultaneously minimizes the sum of the total edge cost with respect to $\tau$ and the single-source shortest paths cost with respect to $\gamma$ \cite{Vasko1, Vasko2}. 
Although this problem is provably $NP$-complete in the general case, we examine certain tractable instances involving various graph constructions of trees and cycles, along with quantities associated to edges and vertices that arise out of these constructions. 
We show that given a graph in which all cycles are edge disjoint, there exists a fast method to determine a cable-trench solution. 
Further, we examine properties of graphs which contribute to the general intractability of the CTP and present some open questions in this direction. 
\end{abstract}

\begin{keyword}cable-trench problem \sep%
    complexity \sep%
    graph decomposition
\end{keyword}

\nonumnote{This project was supported by NSF Grant DMS-1852378 and DMS-2150299, REU Site: Moravian University Computational Methods in Discrete Mathematics.}

\end{frontmatter}

%\setlength{\parindent}{0pt}
%\addtolength{\textheight}{0in}
%\setlength{\parskip}{7pt}

\section{Introduction}

%---------------------------------------
%Explain the cable-trench problem

We begin with a connected graph $G=(V, E)$, where $V$ is the set of vertices and $E$ is the set of edges. Each edge is represented by pairs of the vertices they adjoin. We call $G$ a weighted graph if we have a weight function $\gamma: E\to \R$. 
The \textit{minimum spanning tree} problem and the \textit{single-source shortest paths} problem are problems in the study of combinatorial algorithms with efficient and well-studied solutions.

The \textit{minimum spanning tree} problem is the problem of finding a connected acyclic graph $T = (V', E')$ such that $V'=V$ and $E'\subseteq E$ (referred to as a spanning tree) such that the sum of the weights of each edge in $E'$, with respect to some weight function $\gamma$, is minimized over all possible spanning trees. The total weight of such a $T$ is called the \textit{cost} of that minimum spanning tree, denoted $W_{MST}(T,\gamma)$.
This problem is efficiently solved by Prim's algorithm, in which the spanning tree is constructed by iteratively adding the vertex $v$ such that the cost of adding $v$ is minimized. This is accomplished via a priority queue in which high priorities are assigned to low-cost vertices.

The \textit{single-source shortest paths} problem specifies a distinguished vertex, called the root $r$, and asks to find a spanning tree $T$ for which the total weight of a root path from $r$ to every vertex $w\in V$, with respect to weight function $\gamma$, is minimized for all $w$ (by convention, the length of the path from $r$ to $r$ is $0$). The total weights of all root paths in such a $T$ is called the \textit{cost} of that single-source shortest paths tree, denoted $W_{SPT}(T,\gamma)$.
The single-source shortest paths problem is efficiently solved by Dijkstra's algorithm. Similarly, vertices are placed in a priority queue and vertices are added to the shortest paths tree as the distance of the corresponding root path is updated. 

A note on uniqueness. In both the minimum spanning tree and single-source shortest path tree problems the specific tree attaining minimum cost may not be unique. However, the minimum attainable value of the cost function \emph{is} unique. In this paper, we focus on attaining this unique minimum cost. Even when an explicit construction of a tree attaining such a value is given, we do not mean to imply that this solution is unique, only that the cost associated is indeed minimal.

Given a graph $G$ with a single weight function $\gamma$, the \textit{single constraint cable-trench problem} (SCTP) (introduced in \cite{Vasko1}) asks for a spanning tree $T$ that minimizes some linear combination of the cost of a minimum spanning tree $W_{MST}(T,\gamma)$ and the cost of the single-source shortest paths tree $W_{SPT}(T,\gamma)$ with respect to a root $r$.
The cable-trench cost $cost_{SCTP}(T,\gamma)$ is the linear combination with real coefficients $\alpha$ and $\beta$ defined by the following equation:

$$cost_{SCTP}(T,\gamma) = \alpha W_{MST}(T,\gamma) + \beta W_{SPT}(T,\gamma)$$

The name ``cable-trench'' comes from interpreting this problem as finding the minimum cost to construct a computer network across a large campus. Specifically, we let the root vertex $r$ stand for a central server and all other vertices denote remote work-stations. Now, each work-station needs a cable directly to the central server (i.e. a root path). Minimizing the cost of these cables yields the single-source shortest tree path problem. However, we also need to bury all of the cables, so we need to dig trenches between buildings on campus. Minimizing the cost of these trenches yields the shortest path tree problem. Assuming the cost of purchasing a cable, or digging a trench, is proportional to it's length, we minimize a linear combination of the costs of each individual problem and get the single constraint cable-trench problem.

This problem was shown to be $NP$-complete by noting that given any vertices $s,t$, finding a minimum spanning tree for which the $s-t$ path length is minimal is $NP$-hard \cite{Vasko1}.
Consequently, the cable-trench problem which minimizes the length of all $r-w$ paths is certainly $NP$-complete.
In this case however, there are reasonably efficient and effective algorithms that approximate optimal solutions depending on the ratio $\frac{\alpha}{\beta}$.

In this article, we focus on the \textit{generalized cable-trench problem} which is a variation of the cable-trench problem in which there are two independent weights assigned to each edge. 
That is, there are functions, $\gamma: E\to \R$, $\tau: E\to \R$, and the problem is to find a spanning tree which minimizes the sum of the weights of the single-source shortest paths tree with respect to $\gamma$ and the minimum spanning tree with respect to $\tau$. 
Following the traditional interpretation of this problem with respect to physical network construction, $\gamma$ is called the cable cost function and $\tau$ is called the trench cost function. Because edge weights can always be normalized with respect to $\alpha$ and $\beta$, we may omit the coefficients entirely. 
Subsequently, we aim to minimize the generalized cable-trench cost $cost(T)$ function:
$$cost(T) = W_{MST}(T, \tau) + W_{SPT}(T, \gamma)$$

As a reminder, in this paper we focus on finding a tree $T$ that attains the unique minimum value of $cost(T)$ without any claim that $T$ itself is unique. Informally, we use both \textbf{cable-trench solution} and \textbf{cable-trench tree} to refer to any minimum-cost spanning tree $T$ with respect to edge weight functions $\gamma$ and $\tau$.

 It immediately follows that the generalized cable-trench problem is also $NP$-complete because of the complexity of the single constraint variation. 
 
 Vasko et al. \cite{Vasko2} formulated the mixed integer linear program for the cable-trench problem as follows:

Minimize:
$$\sum\sum\gamma_{ij}x_{ij}+\sum\sum\tau_{ij}y_{ij}$$

subject to

\begin{align*}
    &\sum x_{1j}=n-1 && i=1\\
    &\sum x_{ij}-\sum x_{ki} =-1 && i=2, 3, \dots\\
    &\sum y_{ij}=n-1 && \text{all edges }i<j\\
    &(n-1)y_{ij}-x_{ij}-x_{ji}\geq 0 && \text{all edges }i<j\\
    &x_{ij} \geq 0 && \forall i,j\\
    &y_{ij} \in \{0, 1\} && \forall i,j.
\end{align*} 

Here, $x_{ij}$ denotes the number of cables from vertex $i$ to vertex $j$. Since each cable is a root path originating at $r=v_1$, we interpret the cable as ``oriented'' away from $v_1$ and say it runs \emph{from} vertex $i$ to vertex $j$ if $v_i$ is closer to $v_1$ under this orientation of the cable.
Next, $y_{ij}$ is 1 if and only if a trench is dug between vertices $i,j$. 
The first constraint thus ensures that exactly $n-1$ cables leave the root vertex $r=v_1$.
The second set of constraints ensures that exactly one root path terminates at every vertex $v_i$, other than the root. That is, every vertex has a cable laid \textit{to it} from the root. 
The third set of constraints ensures that $n-1$ trenches are dug. 
The fourth set of constraints ensure that cables are laid where trenches are dug, and the last two constraints guarantee positivity and integrality of the linear program variables.

\subsection{Heuristic algorithms and inapproximability}
%Go over previous research on the problem (heuristics, inapproximability, linear programming, etc)
%Define useful functions like $C, L$

Significant research has been done into heuristic algorithms that can approximate optimal spanning tree solutions to the cable-trench problem. 
In the general case, given the similarity of Dijkstra's solution to the single-source shortest paths problem and Prim's solution to the minimum spanning tree problem, a natural idea is to modify the algorithm such that vertices are added to a spanning tree with priority relative to the cable-trench cost that is added to the tree. 
Vasko et al.  \cite{Vasko2} analyzed such a modified algorithm in the case of large cable-trench networks and found that it reasonably approximated efficient solutions.

In \cite{Khullar}, Khuller,  Raghavachari, and Young describe an algorithm CREW-PRAM which computes a spanning tree with a continuous tradeoff between minimum spanning tree cost and single-source shortest paths tree cost. 
For a given $\lambda > 0$, the algorithm approximates a minimum spanning tree up to a factor of $1+\sqrt{2}\lambda$ and a single-source shortest paths tree up to a factor of $1+\frac{\sqrt{2}}{\lambda}$. %solving the actual cable-trench problem, not a continuous trade-off version
Further work done by Benedito, Pedrosa, and Rosado \cite{Benedito} show that there exists an $\epsilon > 0$ for which approximating an optimal solution up to a factor of $1+\epsilon$ is NP-hard.

\section{Definitions}
%Defining cabling cost and length

\begin{definition}
    Recall that a \textbf{spanning tree} of a graph $G = (V(G), E(G))$ is a connected, acyclic graph $T = (V(T), E(T))$ such that $V(T) = V(G)$ and $E(T)\subseteq E(G)$. 
\end{definition}

\begin{definition}
    Given any (sub)graph $G$, we define the \textbf{size} $|G| = |V| - 1$. That is, $|G|$ denotes the number of edges in any spanning tree of $G$.
\end{definition}

Note, if $G$ is a tree itself, then $|G|$ = $|E|$ as well.

\begin{definition}
    Given a path $P$, we define the \textbf{trench length} $\tau(P)$ as the weighted length of $P$ with respect to the edge-weight function $\tau: E(G)\to \R$.
    $$\tau(P) = \sum_{e\in P}\tau(e)$$
\end{definition}

\begin{definition}
    Given a path $P$, we define the \textbf{cabling length} $L(P)$ as the weighted length of $P$ with respect to the edge-weight function $\gamma: E(G)\to \R$.
    $$L(P) = \sum_{e\in P}\gamma(e)$$
\end{definition}

\begin{definition}
    Given a path $P$ with an enumeration of the edges from one end to the other, $P: (e_1, \dots, e_n)$, we define the \textbf{cabling cost}, \emph{with respect to this orientation}, $C(P)$ as the total cabling length of all distinct subpaths through $P$ starting from $e_1$.
    $$C(P) = \sum_{i = 1}^{n} \sum_{j = 1}^{i}\gamma(e_j)$$
\end{definition}

\begin{lemma}\label{lem:ConcatenatedPreCabling} %Needs to be paraphrased
    Given oriented paths $A,B$, such that $A$ and $B$ are edge-disjoint with the final vertex on $A$ overlapping the first vertex on $B$, we can compute the cabling cost of the path concatenation $AB$ as follows:
        $$C(AB) = C(A) + L(A)|B| + C(B)$$
\end{lemma}

\begin{proof}
Assume path $AB$ comes with oriented edge list $(e_1, \dots , e_n)$ where path $A = (e_1, \dots , e_k)$ and path $B = (e_{k+1}, \dots , e_n)$. The costs of cabling path $A$ and path $B$ are 
$$C(A) = \sum_{i = 1}^{k} \sum_{j = 1}^{i}\gamma(e_j)$$
$$C(B) = \sum_{i = k+1}^{n} \sum_{j = k+1}^{i}\gamma(e_j)$$
and lengths of paths $A$ and $B$ are 
$$L(A) = \sum_{i = 1}^k\gamma(e_i)$$
$$L(B) = \sum_{i = k+1}^n\gamma(e_i)$$

By observation, 
$$L(AB) = L(A) + L(B)$$

Then,
\begin{align*}
    C(AB) &= \sum_{i = 1}^{n} \sum_{j = 1}^{i}\gamma(e_j)\\
    &= \sum_{i = 1}^{k} \sum_{j = 1}^{i}\gamma(e_j) + (n-k)\sum_{i = 1}^{k}\gamma(e_j) + \sum_{i = k+ 1}^{n} \sum_{j = k+1}^{i}\gamma(e_j)\\
    &= C(A) + |B|L(A) + C(B)\\
    &= C(A) + L(A)|B| + C(B)
\end{align*}
\end{proof}

Essentially, we account for the cabling cost of $A$ in the first term $C(A)$. 
Every cable that starts at $A$ and ends in $B$ must pay the entire cabling cost of $A$, after which it is cabled normally, and is accounted for by the cabling cost of $B$ in the last term $C(B)$. 
We thus think of the $L(A)|B|$ term as \textbf{pre-cabling} path $B$ through $A$, which yields the recursive structure shown in Figure \ref{fig:precabling}.

\begin{figure}[hbtp]
    \figlabel{fig:precabling}
    \begin{center}
    \includegraphics[width=300pt]{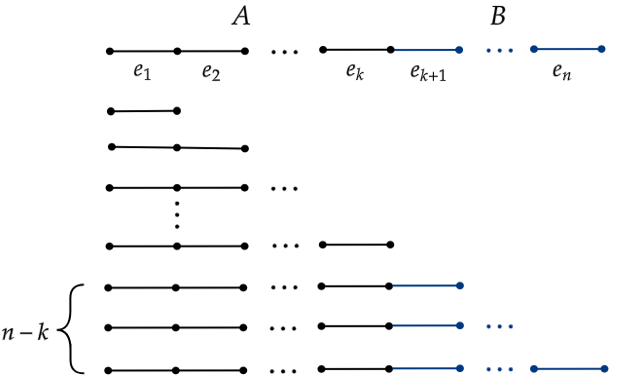}
	\caption{Cabling $B$ through $A$ requires laying $k$ cables of length $\sum_{i=1}^n e_i$ to reach the endpoint of $A$, as well as the additional cabling cost of path $B$. The cabling cost of concatenating paths $A$ and $B$ requires pre-cabling of path $B$ through path $A$.}
    \end{center}

\end{figure}

Lastly, we define two graph operations central to our study of the cable-trench problem.

\begin{definition}
    Given graphs $G, H$, and vertices $v_G$ on $G$ and $v_H$ on $H$, we define the \textbf{wedge} $G\wedge H$ (omitting the vertex information for brevity) as the graph formed by combining $G, H$ such that vertices $v_G, v_H$ are identified as the same vertex.
\end{definition}

\begin{definition}
	Given $G \wedge H$ and a spanning tree $T$ in this wedge graph, we define the \textbf{restriction} of $T$ to $G$ as the graph $T|_G = (V, E)$ with $V = V(G)$ and $E = E(T) \cap E(G)$.
\end{definition}

%----------------------------------------------------------------------------------------
%	METHODS
%----------------------------------------------------------------------------------------

\section{Results on Wedging} {\label{section3}}
Given two graphs, $G$ and $H$, with known cable-trench solutions, we can easily compute a cable-trench solution for the wedge $G \wedge H$, if we perform the wedge operation at their respective root vertices and use this common vertex as the new root of $G \wedge H$.

%------------------------------------------------
%Wedging 2 graphs with known cable-trench trees at their roots, you just wedge their cable-trench trees together

\begin{proposition}{\label{wedge_trees_at_roots}}
Given graphs $G, H$ rooted at $r_G$, $r_H$ and with known cable-trench solutions $T_G$, $T_H$. A cable-trench solution for $G \wedge H$, formed by identifying $r_G, r_H$ and using this new common vertex as the root of the result, is the tree $T_G \wedge T_H$ formed by identifying $r_G, r_H$.
\end{proposition}

\begin{proof}
Since $G$ and $H$ are wedged at their common root vertex and that vertex is the new root, all rooted paths in $G \wedge H$ must be contained completely within either $G$ or $H$ and cannot cross between the two components. Thus, the cost of a spanning tree $T$, in $G \wedge H$, can be decomposed as the sum of the costs of $T|_G$ (i.e. $T$ restricted to $G$) and $T|_H$.

$$cost(T) = cost(T|_G)+cost(T|_H)$$

Furthermore, the costs of each restriction $T|_G$ and $T|_H$ are clearly independent. So, in order to minimize the sum, we only need minimize each term independently.
By assumption, these are minimized by $T_G$ and $T_H$ respectively. Thus, the weight of the minimum-cost spanning tree, in $G \wedge H$, is $cost(T_G) + cost(T_H)$. Finally, $T_G \wedge T_H$ necessarily attains this minimum and thus it is a cable-trench solution for $G \wedge H$ as desired.
\end{proof}

With Lemma~\ref{wedge_trees_at_roots} we have shown that, if the roots of $G$ and $H$ are identified in the wedge operation, and this common vertex is used as the resulting wedge graph's root, the local cable-trench solutions $T_G$ and $T_H$, in $G$ and $H$ respectively, immediately determine a global cable-trench solution in $G \wedge H$. 

This type of ``local-to-global'' information exchange will be the foundation for an inductive heuristic to compute more and more complex cable-trench solutions by decomposing our target graph into the wedge of (many) simpler parts.

To expand the class of graphs for which this local-to-global information exchange is possible, we consider the case of wedging two graphs $G$ and $H$, as above, at the root $r_H$ in $H$ but at an arbitrary vertex $v \neq r_G$ in $G$. In this case, we also let $r_G$ be the root of the wedge graph $G \wedge H$.

Following similar logic to the previous proof we have a one-sided version of Proposition~\ref{wedge_trees_at_roots}.

\begin{lemma}\label{lem:WedgeTreeRemains}
Let $G$ be any graph and $H$ be a graph with known cable-trench solution $T_H$. Consider the wedge graph $G \wedge H$, where this wedge is formed by identifying arbitrary non-root vertex $v$ in $G$ to the root vertex $r_H$ in $H$. If $T$ is a cable-trench solution for $G \wedge H$, we may assume without loss of generality that the restriction $T|_H$ is exactly $T_H$.
\end{lemma}

\begin{proof}
Since $G$ and $H$ are wedged at the root vertex of $H$, any root path $P$ in $G \wedge H$ that ends at a vertex originally from $H$ may be decomposed into two paths $P|_G$ and $P|_H$ with $P|_G$ connecting $r_G$ to $r_H$ entirely within $G$ and $P|_H$ starting from $r_H$ and terminating somewhere in $H$ while remaining entirely within $H$. So, following Lemma~\ref{lem:ConcatenatedPreCabling}, the cabling cost of each root path $P$ terminating in $H$ can be decomposed into the cabling cost of $P|_G$, the pre-cabling cost of $P|_H$ through $P|_G$, and the cabling cost of $P|_H$.

Now, given a specific choice of spanning tree $T$, in $G \wedge H$, we are given a preferred path from $r_G$ to $r_H$ and so we have that the restriction $P|_G$ is the same for all potential root paths $P$ contained in $T$, regardless of which specific vertex in $H$ is its terminus. We denote this specific root path from $r_G$ to $r_H$ by $Q$.

The combined pre-cabling cost of all root paths terminating in $H$ is then independent of the structure of $T|_H$. This fact follows because the pre-cabling cost, through $Q$, is equal for all possible root paths terminating in $H$, hence the total pre-cabling cost for all such paths depends only on how many paths we need, not which specific paths we choose. Since all spanning trees of $H$ have the same number of edges, all potential spanning trees of $G \wedge H$ have the same number of necessary root paths terminating in $H$. Thus, the combined pre-cabling cost for all of $T|_H$ is always $L(Q) | T|_H | = L(Q) |H|$.

The cable-trench cost $cost(T|_G)$ is also clearly independent from the structure of $T|_H$ since the wedge graph root is the same as $r_G$. 

Thus, the cable-trench cost of a spanning tree $T$ can be decomposed as $ cost(T|_G) + L(Q) | H | + cost(T|_H)$ and the first two terms in the sum are independent of the structure of $T|_H$ beyond the size of $H$, which is constant. Consequently, the third term $cost(T|_H)$ may be minimized independently from the choice and/or structure of $T|_G$.

By assumption, $T_H$ attains the minimum cable-trench cost in $H$ and thus it follows that setting $T|_H = T_H$ is one possible way to minimize that contribution, independent of what $T|_G$ looks like. Since we focus only on attaining the minimum cable-trench cost, without worrying about the uniqueness (or lack thereof) of the tree $T$, this gives us a guaranteed workable option as desired.
\end{proof}

With Lemma~\ref{lem:WedgeTreeRemains} we have shown that, when wedging $H$ at its root vertex and regardless of which vertex we identify it with from $G$, we may always use a local cable-trench solution $T_H$, as our presumptive restriction to $H$, in any systematic search for a global cable-trench solution in $G \wedge H$. 

Unfortunately, the local-to-global information exchange coming from $G$, where we perform the wedging operation at a non-root vertex, is necessarily more complicated. However, we may still expand upon our results in certain cases. For the remainder of this section we explore one such case and assume $G$ is a cycle.

\begin{figure}[hbtp]
    \figlabel{fig:CycleTerms}
    \begin{center}
    \includegraphics[width=200pt]{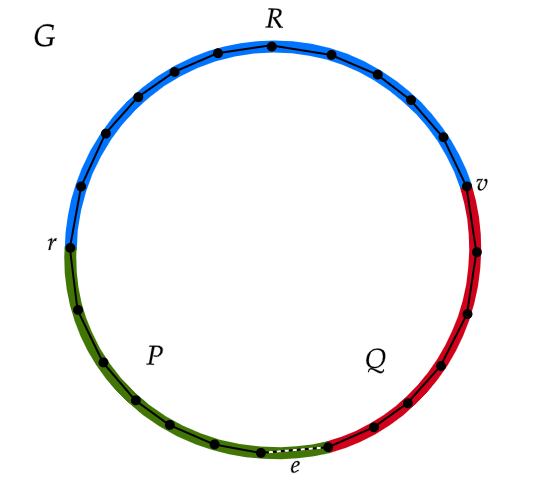}
	\caption{The cycle $G$ and distinguished edge $e$ with labeled components $P, Q,$ and $R$.}
    \end{center}
\end{figure}

Following the diagram for cycle $G$ presented in Figure \ref{fig:CycleTerms}, we define various important structures within $G$, given a specific edge of interest $e$:
\begin{itemize}
    \item $T$ will refer to the spanning tree of $G$ rooted at $r$ that does not contain edge $e$. 
    \item $R$ denotes the root path from $r$ to $v$ that does not include $e$. 
    \item $PQ$ denotes the root path from $r$ to $v$ that does include $e$. The notation is chosen because we will always consider $PQ$ as the concatenation of two paths: $P$ from $r$ to $e$, \emph{including} $e$ and $Q$ from $e$ to $v$, \emph{excluding} $e$.
    \item Note that, by construction, the three paths $P$, $Q$, and $R$ are edge disjoint and that $E(G) = P \cup Q \cup R$.
    \item For cabling length computations, $P$ and $R$ are always assumed to originate at the root. However, the cabling of $Q$ is sometimes done in each of the two possible orientations. So we use $Q^+$ to indicate the clockwise orientation and $Q^-$ to indicate the counter-clockwise orientation when necessary. Here the use of ``clockwise'' and ``counter-clockwise'' is only for expository simplicity. We only need to distinguish the two orientations as opposites.
\end{itemize}

For notation purposes, if an edge has a subscript, the corresponding components in the decomposition from Figure \ref{fig:CycleTerms} associated to this edge all share the same subscript (i.e. $e_i$ specifies spanning tree $T_i$, and splits $G$ into paths $P_i$, $Q_i$, and $R_i$. 

Given two edges, $e_1$ and $e_2$, with one on each of the two distinct paths from $r$ to $v$, we will need to compare the two trees $T_1$ and $T_2$. We denote the path, connecting from $e_1$ to $e_2$ through $v$, by $\tilde{Q} = Q_1 \cup Q_2$. Note, the orientation for $Q_1$ and $Q_2$ must be the same for $\tilde{Q}$ to be an oriented path. So, if we assume $e_1$ is on the counter-clockwise-oriented path from $r$ to $v$ and $e_2$ is on the clockwise-oriented path, then the clockwise orientation, from $e_2$ to $e_1$, on $\tilde{Q}$ arises as $\tilde{Q}^+ = Q_2^+ \cup Q_1^+$. Similarly, the counter-clockwise orientation, from $e_1$ to $e_2$, arises as $\tilde{Q}^- = Q_1^- \cup Q_2^-$. See Figure \ref{fig:multiple_wedges} for an example of such labeling.

Now, we give the following results for wedging arbitrary $H$, at its root vertex, onto cycle $G$ at a non-root vertex.

\begin{lemma}\label{lem:BetterAlpha}
    Given a cycle $G$, suppose a cable-trench solution for $G$ is the tree $T_1$ that deletes edge $e_1$ (see Figure \ref{fig:CycleTerms}). Let $H$ be an arbitrary graph with known cable-trench solution $T_H$ and consider the wedge $G \wedge H$ performed at the root $r_H$ of $H$ and some non-root vertex $v$ of $G$. For any edge $e_2 \neq e_1$ on $P_1 \cup Q_1$, consider the associated tree $T_2$. We must have $cost(T_{2} \wedge T_H) \geq cost(T_{1} \wedge T_H)$.
\end{lemma}
\begin{proof}
    Given that $T_1$ is a cable-trench solution for $G$, we know $cost(T_2) \geq cost(T_1)$. Then, observe that in each of the path decompositions of $G$ arising from $e_1$ and $e_2$ we have $R_1 = R_2$ since $e_1$ and $e_2$ are on the same $r_G$ to $v$ path. We refer to this common path in $G$ generically as $R$ and note that the total cable-trench cost of wedging $T_H$ at $v$ is $L(R)|H| + cost(T_H)$ regardless of whether we choose to delete $e_1$ or $e_2$. 
    Therefore, $cost(T_{2} \wedge H) = cost(T_2) + L(R)|H| + cost(T_H) \geq cost(T_1) + L(R)|H| + cost(T_H) = cost(T_1 \wedge H) $ as desired.
\end{proof}

Before using these lemmas to present an algorithm for computing the cable-trench solution for wedging a graph $H$ with known cable-trench solution onto a cycle $G$, we define the subroutine \textbf{CycleCTP} that takes a cycle graph $G$ as an input and returns an edge $e_1$ for which $T_1$ is a cable-trench solution for $G$ via a brute force minimization over all $n$ spanning trees of $G$. 
Consider the following algorithm that takes the following as input: a cycle $G$, a vertex $v\in G$, and a cable-trench solution $T_H$ for graph $H$ with root $r_H$ to be wedged onto $v$. Note that computing the cost of a given tree can be done in $O(n)$ time.

\newcommand{\cost}{cost}

\begin{algorithm} 
\caption{CycleWedgeCTP}
\label{alg:cap}
\begin{algorithmic}
\State $e_1 \gets \textbf{CycleCTP}(G)$
\State $\min \gets \cost(T_1 \wedge T_H)$ \Comment{Graphs wedged at $v$}
\For{edge $e_2 \in R_1$}
\If{$\cost(T_2 \wedge T_H) <$ min} \Comment{Graphs wedged at $v$}
\State $\text{min} \gets \cost(T_2 \wedge T_H)$
\EndIf
\EndFor
\State \Return $\text{min}$
\end{algorithmic}
\end{algorithm}

Notice by Lemma \ref{lem:BetterAlpha}, we must only loop over edges on path $R_1$ (that is, \emph{not} on path $P_1 \cup Q_1$), which on average enables us to only loop over about half the edges. 
Based on the given cost function and algorithm as defined in Algorithm \ref{alg:cap}, we present the following proposition as a proof of correctness.

%------------------------------------------------

\begin{proposition}\label{prop:inequality}
     Assume $G$ is a cycle rooted at $r$ with cable-trench solution $T_1$, the tree constructed by excluding edge $e_1$. 
     Assume $H$ is an arbitrary graph with known cable-trench solution $T_H$. 
     If we wedge $H$ at its root onto $G$ at a non-root vertex $v$, $T_1 \wedge T_H$ is a cable-trench solution for $G \wedge H$ as long as for all edges $e_2 \in E(G)$ on path $R_1$, we have:
     
    $$0 \geq \tau(e_2) - \tau(e_1) + L(P_2) - L(P_1) + (L(P_2)-L(P_1))|\tilde{Q}| + C(\tilde{Q}^+) - C(\tilde{Q}^-) + (L(R_1) - L(R_2))|H|$$
\end{proposition}

\begin{proof}
Following the convention of Figure \ref{fig:CycleTerms}, the total cable-trench cost of the spanning tree $T_1$ in $G$ is:
$$ \tau(E(G)) - \tau(e_1) + C(P_1^-) - L(P_1) + C(R_1^+) + L(R_1)|Q_1| + C(Q_1^+)$$

By Lemma~\ref{lem:WedgeTreeRemains}, we know a cable-trench solution in $G \wedge H$ may be assumed to restrict to $T_H$ in $H$. So, we consider the spanning tree $T_1 \wedge T_H$ in $G \wedge H$ and note the resulting total cable-trench cost may be computed as:
$$ \tau(E(G)) - \tau(e_1) + C(P_1^-) - L(P_1) + C(R_1^+) + L(R_1)|Q_1| + C(Q_1^+) + L(R_1)|H| + cost(T_H)$$

Here, the $L(R_1)|H|$ term arises from pre-cabling all of the cables that are internal to $T_H$ backward from the root of $H$ to the root of $G$.

By Lemma~\ref{lem:BetterAlpha}, we know only an edge $e_2$ which is on $R_1$ could possibly result in a lower cost cable-trench tree $T_2 \wedge T_H$. 
Thus for such an edge, we compute the total cable-trench cost of $T_2 \wedge T_H$ as:
$$ \tau(E(G)) - \tau(e_2) + C(P_2^+) - L(P_2) + C(R_2^-) + L(R_2)|Q_2| + C(Q_2^-) + L(R_2)|H| + cost(T_H)$$

If the cable-trench cost of excluding edge $e_1$ were to be cheaper than when excluding edge $e_2$, the difference of the above total cable-trench costs must be non-positive. Hence, after some direct cancellations and rearranging of terms, we have the following inequality:
\begin{align*}
    0 \geq\  &\tau(e_2) - \tau(e_1) \\
     +\ &L(P_2) - L(P_1)\\
     +\ &[C(R_1^+) - C(P_2^+)] - [C(R_2^-) - C(P_1^-)]\\
     +\ &L(R_1)|Q_1| - L(R_2)|Q_2|\\
     +\ &C(Q_1^+) - C(Q_2^-)\\
     +\ &L(R_1)|H| - L(R_2)|H|
\end{align*}

Now, observe that $R_1^+$ is the concatenation of $P_2^+$ and $Q_2^+$. 
That is, $R_1^+ = P_2^+Q_2^+$. 
Similarly, $R_2^- = P_1^-Q_1^-$. 
Hence, by Lemma \ref{lem:ConcatenatedPreCabling} we have:
$$C(R_1^+) - C(P_2^+)=L(P_2)|Q_2| + C(Q_2^+)$$
$$C(R_2^-) - C(P_1^-)=L(P_1)|Q_1| + C(Q_1^-)$$

And so, after plugging these in and more rearranging of terms, the inequality becomes:
\begin{align*}
    0 \geq\  &\tau(e_2) - \tau(e_1) + \\
    +\ &L(P_2) - L(P_1) \\
    +\ &[L(P_2)|Q_2| + L(R_1)|Q_1|] - [L(P_1)|Q_1| + L(R_2)|Q_2|] \\
    +\ &C(Q_2^+) - C(Q_1^-)\\
    +\ &C(Q_1^+) - C(Q_2^-) \\
    +\ &L(R_1)|H| - L(R_2)|H|
\end{align*}

Now, by the additivity of $L$, we have that $L(R_1) = L(P_2) + L(Q_2)$. 
So, recalling that we use $\tilde{Q} = Q_1 \cup Q_2$ (as sets of edges), it follows that: 
\begin{align*}
L(P_1)|Q_1| + L(R_2)|Q_2|  &= L(P_1)|Q_1| + L(P_1)|Q_2| + L(Q_1)|Q_2|  \\
&= L(P_1)|\tilde{Q}| + L(Q_1)|Q_2|
\end{align*}

Similarly, we also get:
$$L(P_2)|Q_2| + L(R_1)|Q_1| = L(P_2)|\tilde{Q}| + L(Q_2)|Q_1|$$

And hence the inequality becomes:
\begin{align*}
    0 \geq\ &\tau(e_2) - \tau(e_1)\\
    +\ &L(P_2) - L(P_1)\\
    +\ &L(P_2)|\tilde{Q}| - L(P_1)|\tilde{Q}| \\
    +\ &[C(Q_2^+) + L(Q_2)|Q_1| + C(Q_1^+)]\\
    -\ &[C(Q_1^-) + L(Q_1)|Q_2| + C(Q_2^-)]\\
    +\ &L(R_1)|H| - L(R_2)|H|
\end{align*}

Finally, noting that our orientation convention gives $\tilde{Q}^+ = Q_2^+Q_1^+$ and $\tilde{Q}^- = Q_1^-Q_2^-$, we can use Lemma \ref{lem:ConcatenatedPreCabling} again to get:
$$C(\tilde{Q}^+) = C(Q_2^+) + L(Q_2)|Q_1| + C(Q_1^+)$$
$$C(\tilde{Q}^-) = C(Q_1^-) + L(Q_1)|Q_2| + C(Q_2^-)$$

So our inequality becomes:
\begin{align*}
    0 \geq\ &\tau(e_2) - \tau(e_1)\\
    +\ &L(P_2) - L(P_1) \\
    +\ &L(P_2)|\tilde{Q}| - L(P_1)|\tilde{Q}|\\
    +\ &C(\tilde{Q}^+) - C(\tilde{Q}^-) \\
    +\ &L(R_1)|H| - L(R_2)|H|
\end{align*}
\end{proof}
%--------------------------------
\textbf{Remark:} The inequality in Proposition~\ref{prop:inequality} can be decomposed into pairs of terms that represent specific cost differences between extending $T_1$ and $T_2$ to a potential cable-trench solution for $G \wedge H$:

\begin{itemize}
    \item $\tau(e_2) - \tau(e_1)$ represents the difference in trench length. $T_1$ does not have to `dig the trench' through $e_1$, and $T_2$ does not have to dig the trench through $e_2$.
    
    \item In both $T_1$ and $T_2$, paths $P_1$ and $P_2$ have to be cabled, and they are both cabled from the same direction (originating from root $r$). 
    However, in $T_1$, the last cable along edge $e_1$ does not need to be laid, as the edge is excluded from the tree. Similarly, $T_2$ does not include the last edge on its path.
    Therefore, while the cost of cabling $P_1$ and $P_2$ cancel, $L(P_2) - L(P_1)$ remains as the costs arising from including edges $e_1$ and $e_2$ on paths $P_1$ and $P_2$ respectively.
    
    \item The term $(L(P_2) - L(P_1))|\tilde{Q}|$ represents the difference in the cost of pre-cabling $\tilde{Q}$. In $T_1$, we lay cables to $\tilde{Q}$ with respect to one orientation and thus pre-cable it through $P_2$. While in $T_2$, we lay cables to $\tilde{Q}$ with respect to the opposite orientation and thus pre-cable it through $P_1$. 
    
    \item $C(\tilde{Q}^+) - C(\tilde{Q}^-)$ accounts for the difference in cabling region $\tilde{Q}$ with respect to the two different possible orientations arising from $T_1$ and $T_2$. 
    
    \item Finally, $L(R_1)|H| - L(R_2)|H|$ represents the difference in the pre-cabling costs for graph $H$ through either $R_1$ or $R_2$.
    Note that the cost of the continuation of these cables internal to $H$ cancels out as they are the same in either case of the restriction in $G$ being $T_1$ or $T_2$. 
\end{itemize}

With Proposition~\ref{prop:inequality} we have shown that, when wedging a cycle $G$ at a non-root vertex, we may often use a local cable-trench solution $T_G$ when determining a global cable-trench solution in $G \wedge H$. Moreover, we have a precise check for when this extension of $T_G$ would fail to provide the desired minimum cost in the wedge graph. More in-depth analysis about when, and how, this particular local-to-global transfer of information fails will be explored in detail in Section~\ref{section4}.

For now, observe that our particular choice of wedge vertex $v$ only really impacts the last element of the inequality in Proposition~\ref{prop:inequality}. The remainder of the inequality is only dependent on our choices of $e_1$, $e_2$, and the different decompositions of $G$ into paths each gives. Specifically, the region $\tilde{Q}$, between $e_1$ and $e_2$ which contains the wedge vertex $v$ is evidently quite important.

As such, if we are to generalize the arguments of Proposition~\ref{prop:inequality} to the case of wedging multiple graphs onto a common cycle at multiple distinct vertices, we would again expect the majority of relevant cable-trench cost changes to arise along the same $\tilde{Q}$ path between $e_1$ and $e_2$. 

\begin{figure}[hbtp]
    \figlabel{fig:multiple_wedges}
    \begin{center}
    \includegraphics[width=200pt]{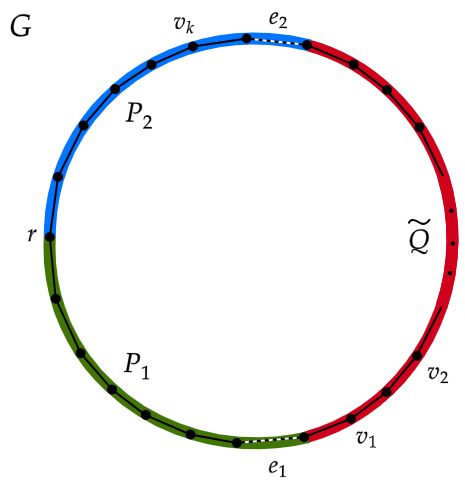}
	\caption{Wedging multiple graphs onto non-root vertices on the cycle $G$.}
    \end{center}

\end{figure}

In general, let $\mathcal{H} = \{H_1, \dots, H_k\}$ be a set of graphs for which each graph $H_i$ has known cable-trench solution. 
Let $\mathcal{V} = \{v_1, \dots, v_k\}$ be a set of vertices, where $v_i$ is the vertex on $G$ which is identified with the root of $H_i$ in the graph $G \wedge H_i$.
We define the set $H(\tilde{Q}) \subseteq \mathcal{V}$ as the set of vertices on path $\tilde{Q}$ that are contained in $\mathcal{V}$. 

Adapting notation conventions specified earlier, for an edge $e_i \in G$, denote by $R_i^{(v)}$ the path from root $r$ to $v$ that does not include $e_i$.  

%---------------------------------------------

\begin{theorem}\label{thm:Wedging}
     Assume $G$ is a cycle rooted at $r$ with cable-trench solution $T_1$, the tree constructed by excluding edge $e_1$. 
     Assume $H_{v_1}, H_{v_2}, \dots, H_{v_n}$ are arbitrary graphs with known cable-trench solutions $S_1, S_2, \dots, S_n$. 
     If we wedge each $H_i$ at its root onto $G$ at the non-root vertex $v_i$, $T_1 \wedge S_1 \wedge S_2 \wedge \dots \wedge S_n$ is a cable-trench solution for $G\wedge H_{v_1} \wedge H_{v_2} \wedge \dots \wedge H_{v_n}$ as long as for all edges $e_2 \neq e_1 \in E(G)$, we have:

    \begin{align}\label{eq:multi_wedge_ineq}
    0 \geq \tau(e_2) - \tau(e_1) + &L(P_2) - L(P_1) + (L(P_2)-L(P_1))|\tilde{Q}| + C(\tilde{Q}^+) - C(\tilde{Q}^-)\\
    &+ \sum_{v\in H(\tilde{Q})} (L(R_{1}^{(v)}) - L(R_{2}^{(v)}))|H_v| \nonumber
    \end{align}
\end{theorem}

\begin{proof}
    The details of the proof are essentially parallel to the proof of Proposition \ref{prop:inequality}. Recall the decomposition of $G$ specified by $e_1$ and our arbitrary choice of $e_2\neq e_1$.

Like before, we will not prefer the tree deleting $e_2$ if $0 \geq cost(T_1 \wedge S_1\wedge \dots \wedge S_n) - cost(T_2 \wedge S_1\wedge \dots \wedge S_n)$. In this case, deleting $e_2$ would necessarily extend to a tree in the wedge graph which cannot be the cable-trench solution we desire.
    
When expanding the cost terms in the above inequality, the cost of extending each tree $T_1$ and $T_2$ into a potential solution within the full wedge $G\wedge H_{v_1} \wedge H_{v_2} \wedge \dots \wedge H_{v_n}$ is done analogously to Proposition \ref{prop:inequality}. The only difference we must examine more carefully is the contribution to the overall cost coming from needing to pre-cable each of the cable-trench trees $S_1, \dots, S_n$ from their respective roots backward to the root of $G$.
     
First, observe that for any graph $H_i$ for which $v_i$ is not on path $\tilde{Q}$, the pre-cabling cost necessary to work backward from the root of $H_i$ to the root of $G$ is identical regardless of which of $T_1$ or $T_2$ we use in $G$. Since, $v_i$ is not on $\tilde{Q}$, the path from $r_G$ to $v_i$ is the same in both trees. Therefore the additional cable-trench cost coming from wedging such an $S_i$ will cancel in the difference $cost(T_1\wedge S_1\wedge\dots \wedge S_n) - cost(T_2\wedge S_1\wedge \dots \wedge S_n)$. 

It thus follows that the only meaningful contributions to overall cable-trench cost arise when wedging trees $S_i$ at vertices $v_i \in \tilde{Q}$. The contribution of these graphs to the difference is precisely
    $$\sum_{v\in H(\tilde{Q})} (L(R_{1}^{(v)}) - L(R_{2}^{(v)}))|H_v|$$
since for each graph $H_i$ such that $v_i\in \tilde{Q}$, we must take the difference between the cost of pre-cabling $|H_i|$ cables along path $R_2^{(v_i)}$ compared to the cost of pre-cabling along path $R_1^{(v_i)}$. 

The same algebraic manipulations as in the proof of Proposition \ref{prop:inequality} then show that the inequality (\ref{eq:multi_wedge_ineq}) holds if and only if $T_1 \wedge S_1 \wedge S_2 \wedge \dots \wedge S_n$ is a minimum cable-trench tree as desired.
\end{proof}

In each case, finding the particular edge $e_2$ for which deleting $e_2$ would yield a cheaper total cable-trench cost involves (in the worst case) verifying the inequality over all possible values of $e_2 \in R_1$. 
This is captured by the loop condition in Algorithm \ref{alg:cap}.

Finally, with Theorem~\ref{thm:Wedging} we have some control on what happens when we wedge multiple graphs onto a single cycle $G$, at any number of non-root vertices. Specifically, in this construction we frequently continue to be able to use a local cable-trench solution $T_G$ in our search for a global cable-trench solution for the resulting wedge graph. Once again, in-depth analysis regarding when, and how, this particular local-to-global transfer of information fails will be explored in Section~\ref{section4}. In Section~\ref{section6} we will also use this property to inductively solve the cable-trench problem for a large collection of graphs made out of wedging arbitrarily many trees and cycles.

%------------------------------------------------

\section{The Strength Index of a Graph}{\label{section4}}

In Section~\ref{section3}, we repeatedly saw that, when wedging a graph $H$ onto $G$, the only additional cable-trench cost, coming from within $G$ itself, is directly related to the cabling cost from the root $r$ to the wedge vertex $v$. Thus, the total cable length of the $r$ to $v$ path contained in a candidate tree is key to understanding if that tree could indeed be a cable-trench solution in $G \wedge H$.

\begin{lemma}\label{lem:Myas}
Assume $G$ is a cycle with cable-trench solution $T_1$ and $H$ is an arbitrary graph with known cable-trench solution $T_H$.
If $L(P_1Q_1) > L(R_1)$, then there is no edge $e_2 \neq e_1$ such that $cost(T_2 \wedge T_H) < cost(T_1 \wedge T_H)$. 
\end{lemma}

\begin{proof}
By assumption, $T_1$ is a cable-trench solution internal to the graph $G$ so $cost(T_1) \leq cost(T_2)$ and thus, after expanding and collecting terms on one side of the inequality, the following must hold:
    \begin{align*}
    0 \leq \tau(e_1) - \tau(e_2) + &C(R_2) - C(R_1) + (C(P_2) - L(P_2)) - (C(P_1) - L(P_1))\\
    &+ L(R_2)|Q_2| - L(R_1)|Q_1| + C(Q_2) - C(Q_1)
    \end{align*}
Now, suppose that there exists an edge $e_2 \neq e_1$ with $cost(T_2 \wedge T_H) < cost(T_1 \wedge T_H)$. By Lemma~\ref{lem:BetterAlpha} $e_2 \in R_1$, hence $P_1Q_1 = R_2$. Then as a consequence of Proposition \ref{prop:inequality} we must have:
    \begin{align*}
    0 > \tau(e_1) - \tau(e_2) &+ C(R_2) - C(R_1) + (C(P_2) - L(P_2)) - (C(P_1) - L(P_1)) \\
    &+ (L(R_2)|Q_2|-L(R_1)|Q_1|) + C(Q_2) - C(Q_1) + (L(R_2) - L(R_1))|H|
    \end{align*}
    
    However, the only way to reconcile these two inequalities is for $L(R_2) - L(R_1) < 0$. That is $L(R_2) < L(R_1)$, which contradicts the assumption that $L(P_1Q_1) > L(R_1)$. Hence, no such $e_2$ can exist.
\end{proof}

What we now know is that, if $L(R_2) > L(R_1)$ then no matter how large the graph $H$ we may always use local cable-trench solutions $T_G$ in $G$ and $T_H$ in $H$ to explicitly construct a global cable-trench solution in $G \wedge H$.

Intuitively, Lemma~\ref{lem:Myas} relies on the fact that when switching to $T_2$ the only way we might reduce our overall cable-trench cost is by requiring a cheaper version of pre-cabling since we now do so through path $R_2$ instead of $R_1$. However, if $L(R_2) > L(R_1)$, then we cannot save on the pre-cabling costs and there is no other way to save elsewhere in our choice of spanning tree internal to $G$.
We formalize this concept in the idea of the \emph{strength} of a cycle $G$.

\begin{definition}
Assume $G$ is a cycle with cable-trench solution $T_1$. Let $v$ denote the wedge vertex and $e$ be an edge in $R_1$. The \textbf{edge strength} of this vertex-edge pair is denoted $\sigma(v,e)$. The value of $\sigma(v,e)$ is the size of the vertex set for the largest graph $H$ (with known cable-trench solution $T_H$) that can be wedged onto $G$, at $v$, such that $T_1 \wedge T_H$ is a cable-trench solution for $G \wedge H$. 
\end{definition}

As shown in Lemma \ref{lem:Myas}, if $L(R_2) > L(R_1)$, then $H$ can be any size, so we define $\sigma(v,e) = \infty$ for any such $(v,e)$ pair.

\begin{definition}
The \textbf{vertex strength} of a wedge vertex $v$ is $\sigma(v) = \min\{\sigma(v,e): e \in R_1\}$.
\end{definition}

The vertex strength is exactly the size of the largest graph that can be wedged onto $G$, at $v$, such that removing $e_1$ remains a correct choice to find a cable-trench solution for the composite wedge graph.

\begin{definition}
The \textbf{breaking edge} is the edge $e$ on $R_1$ with $\sigma(v,e) = \sigma(v)$.
\end{definition}

The breaking edge is called as such since it is the first edge to ``break'' when a large enough $H$ is wedged onto $G$. Specifically, it is the edge which should be removed, instead of $e_1$, in a cable-trench solution for the composite wedge graph.

In the definition of breaking edge, if multiple edges all attain the same minimum $\sigma(v,e)$ value, any of them may be chosen as the breaking edge. As always, we only use the breaking edge to attain minimum cable-trench cost without worrying about uniqueness of the specific tree.

\begin{lemma}\label{lem:breaking_exists}
Assume $G$ is a cycle rooted at $r$ with cable-trench solution $T_1$. If the wedge vertex $v$ is chosen so that $L(R_2) < L(R_1)$, then there exists a breaking edge $e_2$ on the path $R_1$.
\end{lemma}

\begin{proof}
    For any edge $e_2$ on $R_1$ and for arbitrary non-empty graph $H$ with internal cable-trench solution $T_H$, we consider when $cost(T_2 \wedge T_H) \leq cost(T_1 \wedge T_H)$. From Proposition~\ref{prop:inequality}, this would occur exactly when the following inequality is satisfied.
    \begin{align*}
0 < \tau(e_2) - \tau(e_1) + L(P_2) - L(P_1) + (L(P_2)-L(P_1))|\tilde{Q}| + C(\tilde{Q}^+) - C(\tilde{Q}^-) + (L(R_1) - L(R_2))|H|
    \end{align*}
    
    Notice that all of the terms on the right-hand side, except for the last, are fixed for a given choice of $e_1$ and $e_2$. 
    Since we assume $L(R_2) < L(R_1)$ and $|H| > 0$, the final term, $(L(R_1) - L(R_2))|H|$, is strictly positive. So if we choose $H$ such that $|H|$ is large enough to overcome any negative value the rest of the terms may attain, we are guaranteed to satisfy the desired inequality. The smallest such value of $|H|$ is then precisely $\sigma(v,e_2) + 1$. Specifically, any smaller value of $|H|$ would \emph{not} satisfy the inequality.
    
    Now optimizing over all $e_2 \in R_1$, any edge which attains the minimum $\sigma(v,e_2)$ is then, by definition, the breaking edge. 
\end{proof}

As a consequence of Lemma~\ref{lem:breaking_exists}, we know that when $L(R_2) < L(R_1)$ it follows that $\sigma(v)$ is finite. Moreover, the proof demonstrates that both $\sigma(v)$ and the associated breaking edge $e_2$ may be determined with only internal knowledge of $G$ itself. 

Finally, the breaking edge $e_2$ is shown to exist on $R_1$ and the following corollary of Lemma~\ref{lem:BetterAlpha} ensures that no ``secondary'' breaking edge could exist on $P_1 \cup Q_1$.

\begin{corollary}\label{cor:breaking_unique}
Assume $G$ is a cycle rooted at $r$ with cable-trench solution $T_1$. If the edge $e_2 \in R_1$ is the breaking edge for the wedge vertex $v$, there will never be an edge $e_3$ on $P_1\cup Q_1$ such that $cost(T_3 \wedge T_H) < cost(T_2 \wedge T_H)$
\end{corollary}

\begin{proof}
    Follows immediately from Lemma \ref{lem:BetterAlpha} since $e_2$ is the breaking edge, so we know $cost(T_2 \wedge T_H) < cost(T_1 \wedge T_H)$.
\end{proof}

With these results, we can now state the following extension to Proposition~\ref{prop:inequality}.

\begin{theorem}\label{thm:cycles}
Assume $G$ is a cycle with cable-trench solution $T_1$. Let $H$ be an arbitrary graph with known cable-trench solution $T_H$. We consider the wedge graph $G \wedge H$ where we wedge $H$ at its root onto $G$ at a non-root vertex $v$. Let $e_2$ denote the breaking edge associated to $v$.

Then, $T_1 \wedge T_H$ is a cable-trench solution for $G \wedge H$ if $|H| \leq \sigma(v)$ and $T_2 \wedge T_H$ is a cable-trench solution otherwise.
\end{theorem}

\begin{proof}
The definition of the strength $\sigma(v)$ means that $|H| \leq \sigma(v)$ implies the inequality of Proposition~\ref{prop:inequality} cannot be satisfied and so $T_1 \wedge T_H$ is a cable-trench solution. 

Lemma~\ref{lem:WedgeTreeRemains} and the definition of the breaking edge $e_2$ means that $|H| > \sigma(v)$ implies $T_2 \wedge T_H$ is a cable-trench solution.

Finally, Corollary~\ref{cor:breaking_unique} means there are no other possibilities we must consider.
\end{proof}

With Theorem~\ref{thm:cycles} we have shown that, when wedging a cycle $G$ at a non-root vertex, we may always use either a local cable-trench solution $T_G$ or an easily computed alternative local tree $T'_G$ (i.e. the tree obtained by removing the breaking edge) when constructing a global cable-trench solution in $G \wedge H$. 

That is, we now know that while a local solution to the cable-trench problem within $G$ is not always sufficient to find a global solution within $G \wedge H$, there is only a single other possibility for what the restriction to $G$ could be that we might need to consider. So, there are \emph{at most} two options for a cable-trench solution to $G \wedge H$ we must check.

In the broader algorithmic context, the strength indices and breaking edges of a graph $G$ are internal properties of $G$, and thus for a fixed graph $G$, can be precomputed. 
When wedging graph $H$ onto $G$, the spanning tree computation then reduces to a simple check of whether or not $|H|$ exceeds the strength of the wedge vertex, and if so, we immediately know an optimal spanning tree for the wedge graph via knowing the breaking edge. 
In essence, these internal graph properties offer even more time savings in Algorithm \ref{alg:cap} as the for loop is now unnecessary and the cost computation is drastically simplified. 

\section{The $\theta$-Graph}

In Sections~\ref{section3} and \ref{section4}, we established many cases where local information about any cable-trench solutions on individual components lead to global knowledge of a cable-trench solution on a resulting wedge graph. We present further evidence of the efficacy of this ``local-to-global'' notion by giving another class of graphs for which such a process leads to an efficient method for solving the generalized cable-trench problem. 

In this section, we will consider a variant of a cycle graph which is known as a $\theta$-graph. Such a graph contains two distinguished vertices -- a root vertex $r$ and another vertex $v$. These vertices are connected through 3 edge-disjoint paths. Observe that in a $\theta$-graph every spanning tree is uniquely determined by removing one edge each from two of those disjoint paths. In Figure~\ref{fig:ThetaTerms} we give an analogous decomposition of the $\theta$-graph as we had for cycles in Figure~\ref{fig:CycleTerms}. 

This particular labeling assumes two demarcated edges, $e_1$ and $e_2$, are used to decompose $G$ into subpaths pertinent to the implied spanning tree $T_{1,2}$ obtained by deleting $e_1$ and $e_2$. A third edge $e_3$ chosen along the path $R_{1,2}$ then gives two other possible iterations of this style of labeling: that with $e_1$ and $e_3$ demarcated and that with $e_2$ and $e_3$ demarcated.

\begin{figure}[hbtp]
    \figlabel{fig:ThetaTerms}
    \begin{center}
    \includegraphics[width=200pt]{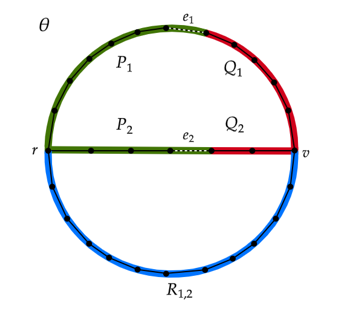}
	\caption{The $\theta$-graph $G$ with distinguished edges $e_1, e_2$ along with labeled components for the corresponding decomposition.}
    \end{center}

\end{figure}

\subsection{A Cable-Trench Solution on $\theta$-graphs}

Our knowledge of cycles and graphs constructed out of wedges with cycles is sufficient to directly solve the cable-trench problem on $\theta$-graphs. To show this, we sort all spanning trees of the $\theta$-graph $G$ into three overlapping groups.

\begin{enumerate}
\item $A$, those spanning trees which remove an edge $e_3$ on $R_{1,2}$.
\item $B$, those spanning trees which remove an edge $e_2$ on $R_{1,3}$.
\item $C$, those spanning trees which remove an edge $e_1$ on $R_{2,3}$.
\end{enumerate}

For the moment, consider spanning trees in group $A$. That is we are certain to exclude an edge $e_3$ on path $R_{1,2} = P_3 \cup Q_3$. Now, we consider the cycle $R_{1,3} \cup R_{2,3}$ and determine the cable-trench solution internal to this cycle as well as the vertex strength $\sigma(v)$ and the breaking edge $e$. Next we re-interpret the paths $P_3$ and $Q_3$ as trees wedged onto the cycle $R_{1,3} \cup R_{2,3}$ at $r$ and $v$ respectively.

By Proposition \ref{wedge_trees_at_roots}, wedging $P_3$ can cause no change internal to $R_{1,3} \cup R_{2,3}$ because it is directly wedged to the root vertex. To determine the effect of wedging $Q_3$, we must decide whether $|Q_3|$ exceeds $\sigma(v)$. Then, by Theorem~\ref{thm:cycles}, we know an optimal cable-trench cost tree for $(R_{1,3} \cup R_{2,3}) \wedge P_3 \wedge Q_3$ in any case. That is, we know an optimal cable-trench cost tree among all trees in $A$.

In a similar way, we may find optimal cable-trench cost trees in $B$ and $C$. Finally, one of these three optimal solutions must be an overall cable-trench solution for the original $\theta$-graph as desired.

\subsection{Analogs of the Strength Index and Breaking Edge in $\theta$-graphs}

We now define appropriate generalizations of the strength index and breaking edge so that we may carefully use $\theta$-graphs in our wedging construction. Specifically, we focus on what happens when we wedge an arbitrary graph $H$ onto the $\theta$-graph $G$ at the non-root vertex $v$.

In the remainder of this section we will assume, without loss of generality, that the internal cable-trench solution for the $\theta$-graph $G$ excludes fixed edges $e_1\in P_1\cup Q_1$ and $e_2\in P_2\cup Q_2$. Following previous conventions, we call the tree $T_{1,2}$.

First, we present an analog to Lemma~\ref{lem:BetterAlpha} which ensures we do not need to consider removing any other edges on $P_1\cup Q_1$ and $P_2 \cup Q_2$ in our search for a cable-trench solution for $G \wedge H$.

\begin{lemma}{\label{lem: better_alpha_theta}}
Given a $\theta$-graph $G$, suppose a cable-trench solution for $G$ is the tree $T_{1,2}$ that deletes edge $e_1$ on $P_1 \cup Q_1$ and edge $e_2$ on $P_2 \cup Q_2$ (see Figure \ref{fig:ThetaTerms}). Let $H$ be an arbitrary graph with cable-trench solution $T_H$ and consider the wedge $G \wedge H$ performed at the root $r_H$ of $H$ and the non-root vertex $v$ of $G$.

For any edges $e_1' \neq e_1$ on $P_1 \cup Q_1$ and $e_2' \neq e_2$ on $P_2 \cup Q_2$, consider the associated tree $T_{1,2}'$. We must have $cost(T_{1,2}' \wedge T_H) \geq cost(T_{1,2} \wedge T_H)$.
\end{lemma}

\begin{proof}
    The proof follows similar logic to the proof of Lemma \ref{lem:BetterAlpha}. 
    Given that a cable-trench solution for $G \wedge H$ excludes edges $e_1 \in P_1 \cup Q_1$ and $e_2 \in P_2 \cup Q_2$, the contribution to the overall cable-trench cost arising from wedging $T_H$ at vertex $v$ is $L(R_{1,2})|H| + cost(T_H)$
    since any path from the root $r$ to a vertex in $H$ must be pre-cabled through $R_{1,2}$.
    
    Indeed, any spanning tree excluding edges from $P_1 \cup Q_1$ and $P_2 \cup Q_2$ must route cables to $H$ through $R_{1,2}$ and so the contribution of pre-cabling $T_H$ is always the same.

     The conclusion follows immediately from this observation and recalling $T_{1,2}$ is a cable-trench solution internal to $G$.
\end{proof}

Note that Lemma~\ref{lem: better_alpha_theta} makes no general claims on the specific edges excluded in the cable-trench solution of $G \wedge H$. It merely says we cannot find a lower cable-trench cost tree for $G \wedge H$ which removes different edges on the exact same pair of paths ($R_{2,3}, R_{1,3},$ or $R_{1,2}$) from those edges removed when focusing solely on $G$.

To examine when, and how, we find cable-trench solutions for the wedge $G \wedge H$ that do not restrict to the presumptive cable-trench solution $T_{1,2}$ in $G$, we provide the following extensions of vertex strength for $\theta$-graphs.

\begin{definition}
Assume $G$ is a $\theta$-graph with cable-trench solution $T_{1,2}$.
Let $v$ denote the wedge vertex and $e_3$ be an edge in $R_{1,2}$. 

The \textbf{first edge strength} of this vertex-edge pair is denoted $\sigma_1(v, e_3)$. 
The value of $\sigma_1(v,e_3)$ is the size of the vertex set for the largest graph $H$ (with known cable-trench solution $T_H$) that can be wedged onto $G$, at $v$, such that $cost(T_{1,2}\wedge T_H) \leq cost(T_{1,3}\wedge T_H)$ and $ cost(T_{1,2}\wedge T_H) \leq cost(T_{2,3}\wedge T_H)$ both hold.

The \textbf{second edge strength} of this vertex-edge pair is denoted $\sigma_2(v, e_3)$. 
The value of $\sigma_2(v,e_3)$ is the size of the vertex set for the largest graph $H$ (with known cable-trench solution $T_H$) that can be wedged onto $G$, at $v$, such that at least one of $cost(T_{1,2}\wedge T_H) \leq cost(T_{1,3}\wedge T_H)$ and $cost(T_{1,2}\wedge T_H) \leq cost(T_{2,3}\wedge T_H)$ holds.
\end{definition}

As before, we use these edge strengths to focus on the wedge vertex by minimizing over all $e \in R_{1,2}$.

\begin{definition}
The \emph{first vertex strength} of a wedge vertex $v$ is $\sigma_1(v) = \min_{e\in R_{1,2}}\sigma_1(v, e)$.

The \emph{second vertex strength} of a wedge vertex $v$ is $\sigma_2(v) = \min_{e\in R_{1,2}}\sigma_2(v, e)$
\end{definition}

We note that $\sigma_1(v)$ is the size of the vertex set for the largest graph $H$ (with known cable-trench solution $T_H$) that can be wedged on the $\theta$-graph $G$, at $v$, such that $T_{1,2} \wedge T_H$ is a cable-trench solution for $G \wedge H$.

It then follows that, when $|H| > \sigma_1(v)$ the cable-trench solution to $G\wedge H$ must restrict to some tree other than $T_{1,2}$ in $G$. By Lemma~\ref{lem: better_alpha_theta}, this new tree must remove an edge $e_3$ on $R_{1,2}$. However, it is unclear whether it would also remove an edge on $R_{1,3}$ or if it would also remove an edge on $R_{2,3}$ instead. In other words, a cable-trench solution to $G \wedge H$ might restrict to a tree of type $T_{1,3}$ or it might restrict to a tree of type $T_{2,3}$ in $G$. 

Indeed, there may be trees of both type $T_{1,3}$ and of type $T_{2,3}$ which are not cable-trench solutions in $G$, but are restrictions of cheaper cable-trench cost options in the wedge graph $G \wedge H$. The second vertex strength $\sigma_2(v)$ exactly measures the size of the vertex set for the largest graph $H$ before we might need to swap between these two possibilities. For example, we might have that for $|H| \leq \sigma_2(v)$ we find $T_{1,3} \wedge T_H$ is a cable-trench solution for $G \wedge H$ while for $|H| > \sigma_1(v)$ we find $T_{2,3} \wedge T_H$ is a cable-trench solution for $G \wedge H$.

To formalize these observations, we provide a generalization of Lemma \ref{lem:Myas}.

\begin{lemma}\label{lem:Myas_theta}
Assume $G$ is a $\theta$-graph with cable-trench solution $T_{1,2}$ and wedge vertex $v$. Then, for arbitrary choice of $e_3 \in R_{1,2}$ and
    \begin{enumerate}
        \item if both $L(R_{2,3}) > L(R_{1,2})$ and $L(R_{1,3}) > L(R_{1,2})$ hold, then $\sigma_1(v) = \sigma_2(v) = \infty$;
        \item if exactly one of $L(R_{2,3}) > L(R_{1,2})$ or $L(R_{1,3}) > L(R_{1,2})$ hold, then $\sigma_1(v) < \sigma_2(v) = \infty$;
        \item if neither $L(R_{2,3}) > L(R_{1,2})$ nor $L(R_{1,3}) > L(R_{1,2})$ hold, then $\sigma_1(v) \leq \sigma_2(v) < \infty$.
    \end{enumerate}
\end{lemma}

\begin{proof}
Assume $H$ is an arbitrary graph with known cable-trench solution $T_H$.

First we prove (1) by showing that $L(R_{1,3}) > L(R_{1,2})$ implies that there is no edge $e_3 \in R_{1,2}$ with $cost(T_{1,3}\wedge T_H) < cost(T_{1, 2}\wedge T_H)$. A similar argument will show that if $L(R_{2,3}) > L(R_{1,2})$ then there is no $e_3\in R_{1,2}$ such that $cost(T_{2,3}\wedge T_H) < cost(T_{1,2}\wedge T_H)$. 

The total cost of $T_{1,2} \wedge T_H$ is
\begin{align*}
    cost(T_{1,2} \wedge T_H) = \tau(E(G))-\tau(e_1)-\tau(e_2)&+ C(P_1^+) - L(P_1) +C(P_2^-) - L(P_2) \\ + C(R_{1,2})+L(R_{1,2})(|H|+|Q_1|+|Q_2|)
    &+ C(Q_1^-) + C(Q_2^-) + cost(T_H)
\end{align*}

where we further decompose $C(R_{1,2})$ as follows
$$C(R_{1,2}) = C(P_3^-) + L(P_3)|Q_3| + C(Q_3^-)$$.

The total cost of $T_{1,3}\wedge T_H$ is similarly computed as
\begin{align*}
    cost(T_{1,3} \wedge T_H) = \tau(E(G))-\tau(e_1)-\tau(e_3) &+ C(P_1^+) - L(P_1)+C(P_3-)-L(P_3) \\+ C(R_{1,3})+L(R_{1,3})(|H|+|Q_1|+|Q_3|)
    &+ C(Q_1^-) + C(Q_3^+) + cost(T_H)
\end{align*}

and again we decompose $C(R_{1,3})$ as
$$C(R_{1,3}) = C(P_2^+) +  L(P_2)|Q_2| + C(Q_2^+)$$.

If $cost(T_{1,3} \wedge T_H)$ is eventually cheaper, then the difference of the two expressions $cost(T_{1,2}\wedge T_H) - cost(T_{1,3}\wedge T_H)$ should be positive. After cancelling like terms in both expressions, this implies that for arbitrary $|H|$
\begin{align}\label{theta_ineq}
    &(\tau(e_3)-\tau(e_2))\\
    &+ L(P_3) + L(P_3)|Q_3|+L(P_3)|Q_3| +L(R_{1,2})(|H|+|Q_1| + |Q_2|)\\
    &-L(P_2)- L(P_2)|Q_2| -L(P_2)|Q_2| - L(R_{1,3})(|H|+|Q_1| + |Q_3|) \\
    &+ C(Q_2^-) - C(Q_2^+) + C(Q_3^-)-C(Q_3^+) > 0
\end{align}

As a side note, we see that parallel to Proposition~\ref{prop:inequality}, the terms in (2) account for the difference in trench lengths, (3) and (4) account for the difference in contributions from paths $R_{1,3}$ and $R_{1,2}$, and (5) accounts for the differently oriented regions $Q_2$ and $Q_3$. 

Remember, we assume that $T_{1,2}$ is an internal cable-trench solution for $G$. Therefore, when $|H|=0$, we must have that $cost(T_{1,2} \wedge T_H) - cost(T_{1,3} \wedge T_H) \leq 0$. 
From this, we can say that when $|H| > 0$, if the inequality~\ref{theta_ineq} is to be satisfied, we \textit{must} have $(L(R_{1,2}) - L(R_{1,3}))|H|>0$.

Hence, in order for there to exist a suitable $e_3$, we need that $L(R_{1,2}) > L(R_{1,3})$. This contradicts our assumption, and so no such $e_3$ exists.

If we apply the same arguments to the tree $T_{2,3}$, we get that, in order for there to exist an $e_3$ with $cost(T_{2,3} \wedge T_H) < cost(T_{1,2} \wedge T_H)$, we must have $L(R_{1,2}) > L(R_{2,3})$. Again, contradicting our assumptions.

So, if both $L(R_{2,3}) > L(R_{1,2})$ and $L(R_{1,3}) > L(R_{1,2})$ for all $|H|$, then there is no cable-trench solution for $G \wedge H$ that restricts to either type $T_{1,3}$ or type $T_{2,3}$. Which gives $\sigma_1(v) = \sigma_2(v) = \infty$ by definition.

In order to prove (2), we apply the rationale in the proof of Lemma~\ref{lem:breaking_exists} to exactly one of the pairs $R_{2,3}, R_{1,2}$ or $R_{1,3}, R_{1,2}$.

For example, say that $L(R_{2,3}) > L(R_{1,2})$. Then, for each choice of $e_3 \in R_{1,2}$ we can directly solve for the smallest value of $|H|$ such that $cost(T_{2,3} \wedge T_H) < cost(T_{1,2} \wedge T_H)$. Then minimize over all $e_3\in R_{1,2}$ to find finite $\sigma_1(v)$.

Since we also assume $L(R_{1,3})\leq L(R_{1,2})$, by the same logic as the proof of part (1), this must give $\sigma_2(v) = \infty$.

Finally, we prove (3) by applying the rationale in the proof of Lemma~\ref{lem:breaking_exists} to both pairs $R_{2,3}, R_{1,2}$ and $R_{1,3}, R_{1,2}$. This, coupled with the fact that $\sigma_1(v) \leq \sigma_2(v)$ by definition, gives the desired result.
\end{proof}

Throughout the preceding proof, our attention was often focused on the the existence of a specific edge $e_3 \in R_{1,2}$. This edge is the $\theta$-graph version of a \emph{breaking edge}. However, there may be more than one such edge in a $\theta$-graph since there are two distinct strengths corresponding to potentially different emergent trees depending on the size $|H|$.

Just the same, the fact that $\theta$-graphs admit tractable solutions is essentially a direct consequence of the results of Sections~\ref{section3} and \ref{section4}. The existence of concrete strength analogs means $\theta$-graphs may be analyzed using similar efficiency to Algorithm~\ref{alg:cap}.

We conjecture that iterating upon the previous constructions should allow us to compute the cable-trench solution for a graph in which two vertices are connected by 4 (or more) edge-disjoint paths. This would mean such graphs are tractable as well by a similar argument to the one presented at the beginning of the section. Once again, it seems that the idea of strengths should enable us to further extend the class of graphs for which we can quickly compute the cable-trench solution.

%------------------------------------------------
\section{Intractability of CTP in General Graphs }\label{section6}
The generalization of the strength precomputation to $\theta$-graphs demonstrates a promising `inductive' use of strength in order to increase the class of graphs that can be extended. Just the same, the increased complexity does present itself in the increasingly expensive precomputation of (multiple) strength statistics about the graph. 
Further, the class of graphs attainable using the above methods is evidently quite delicate; for example, the methods described above do not easily generalize when the wedge vertex $v$ lies on only one of the three paths, $R_{2,3}$, $R_{1,3}$, or $R_{2,3}$, in the $\theta$-graph.

We now return to our discussion of graphs for which we can compute the cable-trench solution quickly. We refer to such graphs as \textbf{tractable}.

\subsection{Tractability in cactus graphs}
As shown in Section~\ref{section3}, if we are given a collection of cycles and trees, we can iteratively construct a graph $G$ by wedging together these pieces and, in parallel, keeping track of cable-trench solution. Furthermore, this can be done in polynomial time.
Such a construction ensures the resulting graph, in which all cycles are edge-disjoint (i.e. each edge is in at most one cycle), is tractable. Such graphs are called \emph{cactus graphs}. 

Decomposing a cactus graph into it's component cycles and trees may be done in linear time via the \emph{biconnected components} algorithm outlined in \cite{Tarjan}. As the name suggests, the graph is decomposed into biconnected components which (for a cactus graph) correspond to cycles and single edges. Thus we can efficiently find a \emph{maximal} decomposition, one in which every pair of components intersects in at most one vertex.
 
Hence, given a cactus graph, following a linear time preprocessing step of the graph into it's component pieces, we may iteratively construct the cable trench solution as Theorem \ref{eq:multi_wedge_ineq} suggests. 
\begin{corollary}
Cactus graphs are tractable.
\end{corollary}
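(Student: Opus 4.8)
The plan is to assemble the corollary from the structural pieces already developed: the path-decomposition oracle, the wedging results of Section~\ref{section3}, and the strength precomputation of Section~\ref{section4}. First I would invoke the oracle on the input cactus graph $G$ to obtain a maximal path decomposition $G_1, \dots, G_m$ in which every two components share at most one vertex; by the remark preceding the corollary such a decomposition exists precisely because $G$ is a cactus, and by Lov\'asz's theorem $m \le \lfloor n/2 \rfloor$, so the decomposition has linearly many pieces. Each $G_i$ is either a path (hence a tree, with trivial cable-trench solution by the opening observation of Section~\ref{section3}) or a cycle.

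Next I would build a rooted ``decomposition tree'' recording how the components attach to one another at their shared vertices, rooted at the component containing $r$. Processing this tree from the leaves upward, at each step we have a component $G_i$ onto which several already-solved subgraphs $H_{v_1}, \dots, H_{v_k}$ have been wedged at distinct vertices $v_1, \dots, v_k$. If $G_i$ is a path, Proposition~\ref{wedge_trees_at_roots} (applied repeatedly, since a path wedged with subtrees at interior vertices is just a tree) gives the solution immediately as the wedge of the pieces. If $G_i$ is a cycle, Theorem~\ref{thm:Wedging} tells us that the cable-trench tree of the composite is obtained from the internal cable-trench solution $T_1$ of the cycle wedged with the $S_j$'s, \emph{provided} the displayed inequality~(\ref{eq:multi_wedge_ineq}) holds for all $e_2 \ne e_1$; otherwise we fall back to the loop of Algorithm~\ref{alg:cap}, checking candidate edges $e_2 \in R_1$ (by Lemma~\ref{lem:BetterAlpha} no edge off $R_1$ can help) and taking the minimum-cost wedge. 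Either way, each component is resolved in $O(n)$ cost evaluations, each itself $O(n)$, so the whole bottom-up pass is polynomial; the precomputation of strength indices (Section~\ref{section4}) further collapses the per-cycle work to a single comparison of $|H|$ against $\sigma(v)$, but even without it the bound is polynomial.

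Finally I would verify correctness of the bottom-up scheme: because the components pairwise intersect in at most a single vertex and all cycles of $G$ are edge-disjoint, the cost of any spanning tree of the composite graph decomposes additively over the components exactly as in the proofs of Proposition~\ref{wedge_trees_at_roots} and Lemma~\ref{lem:WedgeTreeRemains}, so minimizing globally is the same as minimizing each component's contribution given the (already optimal) solutions of its children. The hardest part of the argument is making precise that the maximal path decomposition handed back by the oracle actually induces a \emph{rooted tree} of components compatible with the wedge operation at a single shared vertex each time --- in particular that one never has two components meeting in a way that forces a genuine cycle spanning more than one piece, which is exactly where the cactus hypothesis (not merely ``has a path decomposition'') is essential, and where the ``delicacy'' flagged in the text must be pinned down. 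Once that structural lemma is in hand, the corollary follows by straightforward induction on $m$ using Theorem~\ref{thm:Wedging} at each cycle node and Proposition~\ref{wedge_trees_at_roots} at each path node.
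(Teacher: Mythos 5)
Your proposal is correct and follows essentially the same route as the paper, which obtains the corollary directly from Theorem~\ref{thm:Wedging} together with the path-decomposition oracle and sketches exactly this bottom-up iterative wedging in Example~\ref{ex_cactus}; your version merely makes the component-tree induction and the role of the cactus hypothesis explicit, which the paper leaves informal. The only slight overreach is invoking Lemma~\ref{lem:BetterAlpha} to restrict the fallback search to $R_1$ in the multiple-wedge setting (that lemma is stated for a single wedged graph), but since a cycle has only $n$ spanning trees, checking all of them keeps the scheme polynomial regardless.
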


\begin{figure}[hbtp]
    \begin{center}
    \includegraphics[width=250pt]{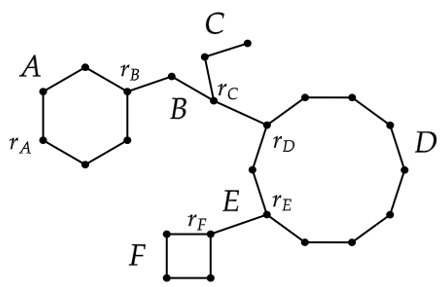}
    \caption{Illustrative example of the biconnected component decomposition (or \textit{`block-cut tree' decomposition} of a cactus graph into cycles $A, D, F$ and trees $B, C, E$.} 
    \end{center}
\figlabel{fig:example}
\end{figure} 
    
\begin{example} \label{ex_cactus}
Consider the graph in Figure \ref{fig:example}. If we begin with tree $E$ rooted at $r_E$, we can wedge cycle $F$ at $r_F$, and thus determine the cable-trench solution for $E \wedge F$ rooted at $r_E$. At this point, we could wedge that result to $D$ at $r_E$, and continue in this way until we wedge the graph $B \wedge C \wedge D \wedge E \wedge F$ onto $A$ at vertex $r_B$. 
At each step\textcolor{red}{,} we know the cable-trench tree, which allows us to wedge it onto another tree or cycle iteratively and efficiently. 
   
\end{example}

\begin{observation}
Let $T$ be a spanning tree for a cactus graph $G$.
Given any component $G'$ of the decomposition of $G$ into paths and cycles, the restriction of $T$ to $G'$ is necessarily a spanning tree of $G'$.
\end{observation} 
Essentially, our knowledge of local spanning trees for individual components of $G$ determines the total number of relevant spanning trees in the entire graph.

\subsection{Generalizing the notion of strength}

The key observation in our proof of the tractability of cactus graphs is that precomputation of strength indices for each individual cycle is the single crucial piece of information necessary for the iterative construction of a cable-trench solution for the full cactus graph. Moreover, those strength indices can be computed completely intrinsically to each cycle and in polynomial time. Our discussion of the $\theta$-graph followed a similar road map.

In the following, we outline a method of defining strength indices in a general graph. This would, in theory, provide the necessary information for a more general iterative approach to constructing cable-trench solutions. However, we will indicate why determination of the exact values for such strength indices must be computationally difficult. Hence, we will directly tie the notion of intractability to the notion of vertex strength.

Consider a graph $G$ with root vertex $r$ and wedge vertex $v$. For every root path $P$ from $r$ to $v$, there is a minimum cable-trench cost tree, denoted $T(P)$, containing said path. We order this collection of minimal trees from smallest to largest in terms of total cable-trench cost and denote the resulting sequence of spanning trees $T(P_1), \dots , T(P_n)$. Note, by definition $T(P_1)$ is the cable-trench solution for $G$ though, importantly, the cable length $L(P_1)$ may not be the minimum among all root paths $P_i$.

\begin{conjecture} (cf. Lemma~\ref{lem:BetterAlpha} and Lemma~\ref{lem: better_alpha_theta}) Given the graphs $G$ and $H$, where $H$ has known cable-trench solution $T_H$. Let $T$ denote a cable-trench solution for the wedge graph $G \wedge H$. If $T$ contains the root path $P_i$ from $r$ to $v$ in $G$, then $cost(T|_G \wedge T_H) \geq cost(T(P_i) \wedge T_H)$.
\end{conjecture}

Basically, the tree $T(P_i)$ internal to $G$ is as good as any candidate for the possible restriction of a cable-trench solution in $G \wedge H$. That is, the local information about a solution in $G$ provides information about a solution in $G \wedge H$.

Recall that $L(P)$ is the cable length of the root path $P$. Beginning with $T(P_1)$, we create a subsequence of the spanning trees $T(P_1), \dots, T(P_n)$ where the root path associated to each successive tree has a shorter cable length than the previous. That is, we shorten our sequence to include only a list of spanning trees, still increasing with respect to total cable-trench cost, but now with $L(P_i) > L(P_{i+1})$ for all $i$ as well. We denote this new subsequence by $M_1 (= T(P_1) ), \dots, M_m$.

We then define the $i^{th}$ vertex strength $\sigma_i(v)$ to be the largest size $|H|$ for which a cable-trench solution to $G \wedge H$ restricts to the spanning tree $M_i$ in $G$. Since the sequence $M_1, \dots, M_m$ has decreasing values of $L(P_i)$, we make the following conjecture.

\begin{conjecture} (cf. Lemma~\ref{lem:Myas} and Lemma~\ref{lem:Myas_theta})
For the above vertex strengths $\sigma_i(v)$, we have $\sigma_1(v) \leq \sigma_2(v) \leq \dots \leq \sigma_m(v) \leq \infty$.
\end{conjecture}

Essentially, there is some finite size $|H|$ for which the additional cost of changing the internal spanning tree within $G$ from $M_i$ to $M_{i+1}$ is outweighed by the savings due to repetitive use of the shorter cable path $P_{i+1}$ (compared to $P_i$) when pre-cabling $T_H$.

Our argument for the efficiency of wedging onto cycles, $\theta$-graphs, and even cactus graphs would then boil down to noting that in all three cases the sequence of spanning trees $T(P_1), \dots, T(P_n)$, and the root-path cable-length decreasing subsequence $M_1, \dots, M_m$, are both computable in polynomial time. Indeed, there are only two possible root paths in the cycle (and three in the $\theta$-graph) each of which has a linear number of potential trees over which we must minimize. So, we can rapidly compute the necessary values of strength and thus streamline the subsequent wedge constructions.

This generalized definition of strengths would extend the ideas we previously showed gave us the highly useful local-to-global induction method. Furthermore, strength is now directly tied to the transition from one internal cable-trench minimizer to another. This formulation appears to mirror arguments employed by Vasko et al. \cite{Vasko2} with respect to the relative size of the cable and trench weights. This formulation also directly ties our tractability definition to the difficulty of finding a minimum cost spanning tree with a specific root path from $r$ to $v$. That is, the specific minimum-cost tree $T(P_i)$ associated to $P_i$ is difficult to find, even among only those spanning trees already containing $P_i$ (again, as shown in \cite{Vasko1}). So, computing \emph{all} the strengths for a general graph becomes $NP$-hard in its own right.

\subsection{Identifying multiple vertices in graph constructions}

We have seen that for cycles, trees, restricted cases of $\theta$-graphs, and graphs constructed by wedging the previous types under specific conditions, the cable-trench solution can be found quickly.
In general, there is a brute-force computation over all spanning trees via Kirchhoff's result on spanning trees that enables a fast search over a reasonably small set of spanning trees. 
A natural next step is to consider the point at which finding the cable-trench solution becomes computationally intractable. 

Of course, a necessary condition for a family of graphs to be \textit{intractable} is for the family to admit an exponential number of spanning trees. 
The example we will use is the $2\times m$ grid graph ($G[2,m]$). By a simple inductive argument, it is clear that the number of spanning trees of $G[2,m]$ grid graph is $\Omega(2^n)$.

\begin{figure}[hbtp]
    \begin{center}
    \includegraphics[width=350pt]{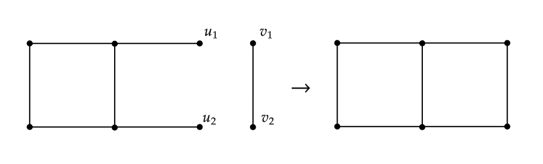}
    \caption{Example of $2\times m$ grid graph} 
    \end{center}
\figlabel{fig:grid}
\end{figure} 

Interestingly, we note that there exist spanning trees $T$ of the $2\times (m+1)$  grid such that the restriction $T'$ in the $2\times m$ grid is not a tree. 
This graph does not satisfy the property established in Example \ref{ex_cactus}, and as a consequence, an algorithm dependent on decomposing $G[2, m]$ should not be expected to consistently find a cable-trench solution. 
 
Because iteratively wedging graphs together at chosen roots preserves the tractability of the graph, and any arbitrary graph admits a path decomposition, we must have a connection (or a class of connections) between graph components for which the graph is no longer tractable. 
We conjecture that families of graphs with exponentially many spanning trees, and those whose construction necessitates identifying paths and cycles at multiple points, do not typically admit tractable solution schemes.

%----------------------------------------------------------------------------------------
%	BIBLIOGRAPHY
%----------------------------------------------------------------------------------------

 % The file containing the bibliography
\bibliography{references}
\bibliographystyle{plain}

\end{document}